\def\ac{{\rm ac}}
\def\pr{{\rm pr}}
\def\GM{{\mathbb G}_m}
\def\AA{{\mathbb A}}
\def\CC{{\mathbf C}}
\def\QQ{{\mathbf Q}}
\def\cL{{\mathcal L}}
\def\cM{{\mathcal M}}
\def\cS{{\mathcal S}}
\def\cX{{\mathcal X}}
\mathchardef\alphag="7C0B \mathchardef\betag="7C0C
\mathchardef\gammag="7C0D \mathchardef\deltag="7C0E
\mathchardef\varepsilong="7C22 \mathchardef\varphig="7C27
\mathchardef\psig="7C20 \mathchardef\zetag="7C10
\mathchardef\epsilong="7C0F \mathchardef\rhog="7C1A
\mathchardef\taug="7C1C \mathchardef\upsilong="7C1D
\mathchardef\iotag="7C13 \mathchardef\thetag="7C12
\mathchardef\pig="7C19 \mathchardef\sigmag="7C1B
\mathchardef\etag="7C11 \mathchardef\omegag="7C21
\mathchardef\kappag="7C14 \mathchardef\lambdag="7C15
\mathchardef\mug="7C16 \mathchardef\xig="7C18
\mathchardef\chig="7C1F \mathchardef\nug="7C17
\mathchardef\varthetag="7C23 \mathchardef\varpig="7C24
\mathchardef\varrhog="7C25 \mathchardef\varsigmag="7C26
\mathchardef\Omegag="7C0A \mathchardef\Thetag="7C02
\mathchardef\Sigmag="7C06 \mathchardef\Deltag="7C01
\mathchardef\Phig="7C08 \mathchardef\Gammag="7C00
\mathchardef\Psig="7C09 \mathchardef\Lambdag="7C03
\mathchardef\Xig="7C04 \mathchardef\Pig="7C05
\mathchardef\Upsilong="7C07
\newtheorem{theorem}[subsection]{Theorem}
\newtheorem{lem}[subsection]{Lemma}
\theoremstyle{definition}
\newtheorem{definition}[subsection]{Definition}
\newtheorem{def-prop}[subsubsection]{Definition-Proposition}
\theoremstyle{remark}
\theoremstyle{plain}
\numberwithin{equation}{subsection}
\def\boxit#1#2{\setbox1=\hbox{\kern#1{#2}\kern#1}%
\dimen1=\ht1 \advance\dimen1 by #1 \dimen2=\dp1 \advance\dimen2 by
#1
\setbox1=\hbox{\vrule height\dimen1 depth\dimen2\box1\vrule}%
\setbox1=\vbox{\hrule\box1\hrule}%
\advance\dimen1 by .4pt \ht1=\dimen1 \advance\dimen2 by .4pt
\dp1=\dimen2 \box1\relax}
\def\AA{{\mathbb A}}
\def\CC{{\mathbb C}}
\def\LL{{\mathbb L}}
\def\QQ{{\mathbb Q}}
\def\cL{{\mathcal L}}
\def\cM{{\mathcal M}}
\def\cS{{\mathcal S}}
\def\cX{{\mathcal X}}
\mathchardef\alphag="7C0B \mathchardef\betag="7C0C
\mathchardef\gammag="7C0D \mathchardef\deltag="7C0E
\mathchardef\varepsilong="7C22 \mathchardef\varphig="7C27
\mathchardef\psig="7C20 \mathchardef\zetag="7C10
\mathchardef\epsilong="7C0F \mathchardef\rhog="7C1A
\mathchardef\taug="7C1C \mathchardef\upsilong="7C1D
\mathchardef\iotag="7C13 \mathchardef\thetag="7C12
\mathchardef\pig="7C19 \mathchardef\sigmag="7C1B
\mathchardef\etag="7C11 \mathchardef\omegag="7C21
\mathchardef\kappag="7C14 \mathchardef\lambdag="7C15
\mathchardef\mug="7C16 \mathchardef\xig="7C18
\mathchardef\chig="7C1F \mathchardef\nug="7C17
\mathchardef\varthetag="7C23 \mathchardef\varpig="7C24
\mathchardef\varrhog="7C25 \mathchardef\varsigmag="7C26
\mathchardef\Omegag="7C0A \mathchardef\Thetag="7C02
\mathchardef\Sigmag="7C06 \mathchardef\Deltag="7C01
\mathchardef\Phig="7C08 \mathchardef\Gammag="7C00
\mathchardef\Psig="7C09 \mathchardef\Lambdag="7C03
\mathchardef\Xig="7C04 \mathchardef\Pig="7C05
\mathchardef\Upsilong="7C07
\def\ord{{\rm ord}}
\begin{document}\title[Composition with a two variable function]
{Composition with a two variable function}

\author{Gil Guibert}

\address{39 quai du Halage,
94000 Cr\'eteil, France}

\email{guibert9@wanadoo.fr}

\author{Fran\c cois Loeser}

\address{{\'E}cole Normale Sup{\'e}rieure,
D{\'e}partement de math{\'e}matiques et applications, 45 rue
d'Ulm, 75230 Paris Cedex 05, France (UMR 8553 du CNRS)}
\email{Francois.Loeser@ens.fr}
\urladdr{http://www.dma.ens.fr/~loeser/}

\author{Michel Merle}

\address{Laboratoire J.-A. Dieudonn\'e,
Universit\'e de Nice - Sophia Antipolis, Parc Valrose, 06108 Nice
Cedex 02, France (UMR 6621 du CNRS)} \email{Michel.Merle@unice.fr}
\urladdr{http://www-math.unice.fr/membres/merle.html}

\maketitle

\begin{center}
{\Large\bf}

\vspace{3mm}
\end{center}
\vspace{3mm}
\section{Introduction}In  \cite{nemethi1} and \cite{nemethi2},
A. N\'emethi studied the Milnor fiber and monodromy zeta function
of composed functions of the form $f (g_1, g_2)$ with $f$ a two
variable polynomial and $g_1$ and $g_2$ polynomials with distinct
sets of variables. The present paper addresses the question of
proving similar results for the motivic Milnor fiber introduced by
Denef and Loeser, cf.
\cite{motivic},\cite{barc},\cite{looi},\cite{lef}. In fact,
N\'emethi later considered in \cite{nemethi3} the more general
situation of a composition $f \circ  \mathbf{g} \colon (X,x)\to
(\CC^2,0)\to(\CC,0)$, where $\mathbf{g}$ has a reasonable
discriminant.
 Still later, N\'emethi and
Steenbrink \cite{ns} proved similar results at the level of the
Hodge spectrum \cite{St1},\cite{St2},\cite{Va}, using the theory
of mixed Hodge modules. In particular, they were  able to compute,
under mild assumptions, the Hodge spectrum  of composed functions
of the form
 $f (g_1, g_2)$ without assuming
the variables in $g_1$ and $g_2$ are distinct. Their result
involves the discriminant of the morphism ${\bf g}Ê= (g_1, g_2)$.
In a previous paper \cite{ivc}, we computed the motivic Milnor
fiber for functions of the form $g_1 + g_2^{\ell}$ when $\ell$ is
large without assuming the variables in $g_1$ and $g_2$ are
distinct. The corresponding result for the Hodge spectrum goes
back to M. Saito \cite{saito} and is a special case of the results
of N\'emethi and Steenbrink \cite{ns}. So, it seems very natural
to search for a full motivic analogue of the results of \cite{ns}.
At the present time, we are unable to realize this program and we
have to limit ourself, as we  already mentioned, to the case when
$g_1$ and $g_2$ have no variable in common. Already extending our
result to the case when one only assumes the discriminant of the
morphism ${\bf g}$ is contained in the coordinate axes seems to
require new ideas.

\bigskip

In this paper we consider a polynomial $f$ in $k[x,y]$ and we
assume that $f(0,y)$ is non zero of degree $m$. We denote by $i_p$
the closed embedding into $\AA^2_k$ of a point $p$ in
$F_0=f^{-1}(0)\cap x^{-1}(0)$. We consider the motivic Milnor
fiber $\cS_f$ of the function $f: \AA^2_k\longrightarrow \AA^1_k$
whose restriction $i_p^* \cS_{f}$ above $p$ is an element of the
Grothendieck ring $\cM_{\GM}^{\GM}$. We then reformulate Guibert's
computation of the motivic Milnor fiber of germs of plane curve
singularities \cite{guibert} using generalized convolution
operators of \cite{glm}. More precisely, we express it in terms of
the tree $\tau(f,p)$ associated  to $f$, depending on the given
coordinate system $(x,y)$ on the affine plane $\AA^2_k$. Let us
recall the tree $\tau(f,p)$ is obtained by considering the Puiseux
expansions of the roots of $f$ at $p$, cf.
\cite{kl},\cite{eggers}.  To any so-called rupture vertex $v$ of
this graph, we attach a weighted homogeneous polynomial $Q_{v,f}$
in $k[c,d]$. We have defined in \cite{glm} a generalized
convolution by such a polynomial. It is a morphism from $\cM_{\GM
\times \GM}^{\GM}$ to $\cM_{\GM}^{\GM}$, but can be extended to a
morphism from $\cM_{\AA_k^1\times \GM}^{\GM}$ to
$\cM_{\GM}^{\GM}$. We denote by $\varpi_j$ the morphism
$x\longmapsto x^j$ from $\GM$ to $\GM$ and by $m_p$ the order of
$p$ as a root of $f(0,y)$. One can then reformulate Guibert's
theorem as
\begin{equation}\label{guib}\tag{$\ast$}
i_p^* \cS_{f}=[\varpi_{m_p} :\GM\longrightarrow \GM]-\sum_v
\Psi_{Q_{v,f}}( [\mathrm{Id}:\AA_k^1\times \GM \longrightarrow
\AA_k^1\times \GM ])
\end{equation}
where the sum runs over the set of rupture vertices of
$\tau(f,p)$.

For $1\leq j\leq 2$, let $g_j:X_j \longrightarrow \AA_k^1$ be a
function on a smooth $k$-variety $X_j$. By composition with the
projection, $g_j$ becomes a function on the product $X=X_1\times
X_2$ and we  write  $\mathbf{g}$ for  the map $g_1\times g_2 : X
\rightarrow \AA^2_k$. The main result of this paper, Theorem
\ref{main}, gives a formula for
$i^* \cS_{f\circ \mathbf{g}}$,  
where $i$ denote the inclusion of $g_1^{-1} (0) \cap
g_2^{-1}Ê(0)$,
 similar to (\ref{guib}),
with $[\mathrm{Id}:\AA_k^1\times \GM \longrightarrow \AA_k^1\times
\GM ]$ replaced by a virtual object $A_v$. The virtual object
$A_v$ is
 defined inductively in terms of the tree
 associated to $f$ at the origin of $\AA^2_k$
 and
of $A_{v_0}$, where $v_0$ is the first (extended) rupture vertex
of $\tau(f,p)$, and $A_{v_0}$ depends only on $\mathbf{g}$.

\section{Preliminaries and combinatorial set up}\label{section2}


\subsection{}We fix an algebraically closed field $k$ of characteristic 0.
For a variety $X$ over $k$, we denote by $\cL (X)$ and $\cL_n (X)$
the spaces of arcs, resp. arcs mod $t^{n+1}$ as defined in
\cite{arcs}. As in \cite{ivc}, we denote by $\cM_X$ the
localisation of the Grothendieck ring of varieties over $X$ with
respect to the class of the relative line. We shall also use the
$\GM$-equivariant variant $\cM_{X \times \GM^p}^{\GM}$ defined in
\cite{glm}, which is generated by classes of objects $Y
\rightarrow X \times \GM^p$ endowed with a monomial $\GM$-action.

Also, if $p$ is a closed point of $X$ we denote by $i_p$ the
inclusion $i_p : p \rightarrow X$ and by $i_p^*$ the corresponding
pullback morphism at the level of rings $\cM$.

\subsection{}Let us start by recalling some basic
constructions introduced by Denef and Loeser in \cite{motivic},
\cite {lef} and \cite{barc}.

Let $X$ be a smooth variety over $k$ of pure dimension $d$ and $g
: X \rightarrow \AA^1_k$. We set $X_0 (g)$ for the zero locus of
$g$, and consider, for $n \geq 1$, the variety
\begin{equation}
\cX_n (g) := \Bigl \{\varphi \in \cL_n (X) \Bigm  \vert \ord_t g
(\varphi) = n \Bigr\}.
\end{equation}
Note that $\cX_n (g)$ is invariant by the $\GM$-action on $\cL_n
(X)$. Furthermore $g$ induces a morphism $g_n  : \cX_n (g)
\rightarrow \GM$, assigning to a point $\varphi$ in $\cL_n (X)$
the coefficient $\ac (g (\varphi))$ of $t^n$ in $g (\varphi)$,
which we shall also denote by $\ac (g) (\varphi)$. This morphism
is homogeneous of weight $n$ with respect to the $\GM$-action on
$\cX_n (g)$ since $g_n (a \cdot \varphi) = a^n g_n (\varphi)$, so
we can consider the class $[\cX_n (g)]$ of $\cX_n (g)$ in
$\cM_{X_0 (g) \times \GM}^{\GM}$.

We now consider the motivic zeta function
\begin{equation}
 Z_g(T): =\sum_{n\geq 1}[\cX_n(g)]\,\LL^{-nd}\,T^n
\end{equation}
in $\cM_{X_0 (g) \times \GM}^{\GM}[[T]]$. Note that $Z_g = 0$  if
$g = 0$ on $X$.

Denef and Loeser showed in \cite{motivic} and \cite{barc} (see
also \cite{lef}) that $Z_g(T)$ is a rational series by giving a
formula for $Z_g(T)$ in terms of a resolution of $f$. They also
showed that one can consider $ \lim_{T \mapsto \infty} Z_g (T)$ in
$\cM_{X_0 (g) \times \GM}^{\GM}$ and they define the motivic
Milnor fiber of $g$ as
\begin{equation}
\cS_g := - \lim_{T \mapsto \infty} Z_g (T).
\end{equation}

\subsection{}In this subsection we do not assume $X$ to be  smooth.
For technical reasons we shall use in the present paper the
following innocuous variant of $\cM_{X \times \GM^{p}}^{\GM}$:
replacing everywhere in the definition the first $\GM$-factor
endowed with the $\GM$-action by multiplicative translation
$\lambda \cdot x = \lambda x$ by $\AA^1_k$ with ``the same"
$\GM$-action one gets a ring $\cM_{X
 \times \AA^1_k \times \GM^{p- 1}}^{\GM}$ generated by classes of
objects $Y \rightarrow X \times \AA^1_k \times \GM^{p- 1} $
endowed with a monomial $\GM$-action.

If $Q$ is a quasihomogeneous polynomial in $p$ variables, we
defined in \cite{glm} a convolution operator
\[
\Psi_Q : \cM_{X \times \GM^{p}}^{\GM} \longrightarrow \cM_{X
\times \GM}^{\GM}.
\]
In this paper we shall use the slight variant, still denoted by
$\Psi_Q $, which is obtained with the same definition, replacing
$\GM^{p}$ by $\AA^1_k \times \GM^{p- 1}$,
\[
\Psi_Q : \cM_{X \times \AA^1_k \times \GM^{p- 1} }^{\GM}
\longrightarrow \cM_{X \times \GM}^{\GM}.
\]
In fact, such  constructions carry over for any toric variety with
torus $\GM^p$, not only for $ \AA^1_k \times \GM^{p- 1}$.

\subsection{}Fix a positive integer $N$ and consider the
ring of fractional power series $k[[x^\frac{1}{N}]]$. Given  a
positive rational number $r$ we denote by $I_{\geq r}$ the ideal
of power series of order at least $r$ in $k[[x^\frac{1}{N}]]$. We
call the quotient $k[[x^\frac{1}{N}]]/I_{\geq r}$ the ring of
$r$-truncated fractional power series.

To a $r$-truncated fractional power series $y$ one assigns a
labelled rooted real metric tree $\tau_r(y)$ in the following way.
The total space $\tau_r(y)$ is  the half-open interval $[0,r)$ and
its  vertices are the positive exponents with non zero
coefficients of the expansion of $y$ in powers of $x$ together
with the origin which is the root. We define the  \emph{height} of
a vertex to be its distance to the root. We label each vertex by
the coefficient of the corresponding term (this coefficient is
non-zero for all vertices except maybe for  the root). The
vertices are ordered by the height. Starting above a vertex there
is only one edge. This edge ends with the next vertex if there is
one and remains open above the last vertex. We label each edge by
0 and we say
 $\tau_r(y)$ is of height $r$. We denote by $|\tau_r(y)|$ the
underlying unlabelled tree. Notice that we can see the labels as
degree 1 polynomials of $k[X]$ (or cycles in $\AA_k^1$), that is,
$X$ for an edge and $X-a$ for a vertex labelled by $a$.

If now $y$ is a power series in $k[[x^\frac{1}{N}]]$, we denote by
$\tau_r(y)$ the height $r$ tree associated its  truncation of $y$
at order $r$. Thus, for $r<r'$, $\tau_r(y)$ is obtained from
$\tau_{r'}(y)$ by truncating up to height $r$. We denote by $\tau
(y)$ the inductive limit of the system $(\tau_r(y))_{r\in \QQ}$
and call it the  tree associated to the power series $y$.

\subsection{}We consider a two variable polynomial $f$ in
$k[x,y]$. We assume that $f(0,y)$ is non zero of degree $m$ and we
consider the $m$ Newton-Puiseux expansions $y_i$, 
$1 \leq i \leq m$, associated to $f$ at the points of
$f^{-1}(0)\cap x^{-1}(0)$. There exists an integer $N$ such that
these roots are elements of the ring of fractional power series
$k[[x^\frac{1}{N}]]$, namely, they are the roots of the polynomial
$f$ in $k[[x^\frac{1}{N}]]$.

Fix a positive rational number $r$. We denote by $\bigcup_{i=1}^m
\tau_r(y_i)$ the labelled rooted real metric tree which
 is obtained as follows.  In the disjoint union of
the trees $\coprod_{i=1}^m |\tau_r(y_i)|$ we identify two vertices
(resp. two edges) if they have same height and same label. If $v$
is a vertex (resp. an edge) shared by trees $\tau_r(y_i)$ for $i$
in $J$, then its label on the union is the $|J|$-th power of its
label on any of the $\tau_r(y_i)$, $i$ in $J$.


Since the  group of $N$-roots of unity acts on the $r$-truncated
expansions $(y_1,\ldots, y_m)$, it also acts on  $\vert
\bigcup_{i=1}^m \tau_r(y_i)\vert$. We denote by $|\tau_r(f)|$ the
separated quotient and by $\pi : \vert \bigcup_{i=1}^m
\tau_r(y_i)\vert \rightarrow |\tau_r(f)|$ the quotient morphism.
Note that the connected components of $|\tau_r(f)|$ are in natural
bijection with points of $f^{-1}(0)\cap x^{-1}(0)$. For any such
point $p$, we denote by $\vert \tau(f,p)\vert $ the corresponding
connected component which is naturally endowed with the structure
of a rooted real metric tree.
We attach labels to the vertices and edges of $|\tau_r(f)|$ in the
following way:
 \begin{itemize}
\item[$\bullet$] If $e$ is an edge of $|\tau_r(f)|$, the label
attached to $e$ is the label on any element of $\pi^{-1}(e)$. It
is a power of $X$ in $k[X]$ and we denote it by $P_{e,f}$. We will
call \emph{degree of the edge} $e$ the degree of $P_{e,f}$.
\item[$\bullet$]  If $v$ is a vertex
of $|\tau_r(f)|$, the label on $v$ is the product  of the labels
on $\pi^{-1}(v)$. We denote it by $P_{v,f}$. We will call
\emph{degree of the vertex} $v$ the degree of $P_{v,f}$. Notice
that the degree of a vertex $v$ is equal to the degree of the edge
$e$ which ends in $v$.
\end{itemize}


For $r<r'$, the graph $\tau_r(f)$ is the truncation of
$\tau_{r'}(f)$ at height $r$. The \emph{graph of contacts}
$\tau(f)$ defined by $f$ along $f^{-1}(0)\cap x^{-1}(0)$ is the
inductive limit of the graphs $\tau_r(f)$, $r\in \QQ$, cf.
\cite{kl}, \cite{eggers}. We say that a vertex $v$ of $\tau(f)$ is
a \emph{rupture vertex} if the set of zeroes of $P_{v,f}$ contains
at least two points in $\AA_k^1$. We define the augmented set of
rupture vertices of the tree $\tau(f,p)$ as the set of rupture
vertices of $\tau(f,p)$ together with the
vertex of minimal non zero height on $\tau(f,p)$.

We fix from now on a point $p$ which will be assumed for
simplicity to be the origin in $\AA_k^2$. For any arc $\varphi$ in
$\cL(\AA^2_k)$ such that $\varphi(0)=p$ and $x(\varphi)\neq 0$,
there exist power series $\omega$ in $k[[t]]$ and $\sum_j b_j
\omega^j$ in $k[[\omega]]$, and an integer $M$ such that
$\mathrm{gcd}(M, \{j \mid b_j\neq 0\})=1$ and
\begin{equation*}\begin{array}{rcl}
x(\varphi(t)) &=& \omega(t)^M\\ y(\varphi(t))&=&\sum_j b_j
\,\omega(t)^j.
\end{array}
\end{equation*}
Hence
 \[
y(\varphi(t))=\sum_j b_j (x(\varphi(t))^{\frac{j}{M}}
 \]
  is a fractional power series in
$x(\varphi(t))$. We consider the tree $\tau(y)$ with
 \[y(x):=\sum_j b_j
x^{\frac{j}{M}}\] in $k[[x^{\frac{1}{M}}]]$.

The power series  $\omega$ is defined up to an
 $M$-rooth of unity so
that the $b_j$'s are defined up to a factor $\zeta^j$ with $\zeta$
an $M$-root of unity. Two different choices lead to trees in the
same $\mu_N$-orbit. This orbit is is denoted by $\tau (\varphi)$.
Notice that $\tau(\varphi)$, as well as $\tau(f)$, depends on the
system of coordinates $(x,y)$.

\begin{definition}Consider $\varphi$ in $\cL(\AA^2_k)$ and
$f$ in $k[x,y]$ as before. The \emph{order of contact} of
$\varphi$ with $f$ is the maximum number $s$ in $\QQ\cup
\{\infty\}$ such that $\tau_s(\varphi)$ is included in $\tau_s(f)$
(it is infinite if and only if $f(\varphi)=0$). The \emph{contact}
of $\varphi$ with $f$ is the tree $\tau_r(\varphi)$ where $r$ is
the order of contact of $\varphi$ with $f$.
\end{definition}

\subsection{} From now on the polynomial $f$ is fixed in
$k[x,y]$ and we denote by $m$ the degree of $f(0,y)$. For a
positive rational number $r$, by a  contact $\tau$ of order $r$,
we mean a subtree of $\tau_r(f,p)$ which is isomorphic to $[0,
r)$. In particular $\tau$ is rooted at $p$ and its closure in
$\tau_r(f,p)$ contains a unique point  of height $r$, not
necessarily a vertex of $\tau_r(f,p)$, which completely determines
$\tau$. To such a contact $\tau$  we assign a polynomial
$P_{\tau,f}$ in the following way. The last and (semi)open edge of
$\tau$ is contained in a unique edge $e$ of $\tau_r(f,p)$.
\begin{itemize}
\item[$\bullet$] If $e$ ends at a vertex $v$ at height $r$
of $\tau_r(f,p)$ (in this case we  say that $\tau$ ends at the
vertex $v$), we will set $P_{\tau,f}=P_{\tau,v}$.
\item[$\bullet$] Otherwise,
(in that case we say that $\tau$ ends at the edge $e$) we set
$P_{\tau,f}=P_{\tau,e}$.
\end{itemize}
By definition of contact, there is an integer $M$ and a polynomial
$y_\tau$ in $k[\omega]$, of degree strictly smaller than $rM$,
both depending only on $\tau$, such that for any arc $\varphi$ of
contact $\tau$ with $f$, there exists a series $\omega$ in
$k[[t]]$, $\ord_t(\omega)=\ell$,  such that
\begin{equation*}\label{contact}
\begin{array}{rcl}
x(\varphi(t))&=&\omega(t)^M\\
y(\varphi(t))&=&y_\tau(\omega(t))\quad [\mathrm{mod}(t^{\lceil
rM\ell \rceil})].
\end{array}
\end{equation*}
For an arc $\varphi$ of contact $\tau$ with $f$, the quotient
$\ord_t (f(\varphi))/\ell$ is an integer and depends only on
$\tau$. We denote it by $\nu(\tau)$. One always has the inequality
$\nu(\tau)\geq Mr$.

The tree $\tau(f,p)$ is built from the Puiseux expansions of the
$m$ roots of $f(x,y)$ in the ring of fractional power series
$\bigcup_N k[[x^{1/N}]]$. Conversely, to any finite subtree
$\varsigma$ of $\tau(f,p)$, we can associate a polynomial
$f_\varsigma$ in $k[x,y]$ which is the minimal polynomial of the
$m$ Puiseux expansions restricted to $\varsigma$. Considering the
tree associated to the polynomial $f_\varsigma$, we get a tree
$\tau(f_\varsigma, p)$ which is an infinite tree with a finite
number of vertices. The intersection of $\tau(f_\varsigma, p)$
with $\tau(f,p)$ contains $\varsigma$. As an example, we can
consider the tree $\tau_r$ obtained from $\tau(f,p)$ by truncation
at height $r$. We will denote by $\overline{\tau_r}$ the tree
$\tau(f_{\tau_r}, p)$. 

\section{Guibert's theorem revisited}

\subsection{}We consider the following set of arcs:


\[
\cX_{\tau,\ell} := \Bigl \{\varphi \in \cL_{\bar{\nu}(\tau)\ell}
(\AA^2_k) \Bigm \vert \varphi \mbox{ has contact $\tau$ with $f$,
}\;  \ord_tx(\varphi)=M\ell\Bigr\}.
\]

where $\bar{\nu}(\tau )$ is the maximum of the integers $\nu (\tau
)$ and $M$.

\medskip


We denote by $Q_{\tau,f}$ the function $\omega^{\nu(\tau)}
P_{\tau,f}(\omega^{-Mr}c)$. One should note  that $Q_{\tau,f}$ is
a polynomial in $k[c,\omega]$, even if $Mr$ may not be an integer.
\begin{lem}\label{isoxrl}Consider a contact $\tau$ and an integer $\ell$ and
denote by $N(\tau,\ell)$ the integer $2\bar{\nu}(\tau)\ell -M\ell
- \lfloor Mr\ell\rfloor $. For any arc $\varphi$ in
$\cX_{\tau,\ell}$, there exist two series $\omega$ and
$\varepsilon$ in $k[t]/t^{\bar{\nu}(\tau)\ell+1}$ such that
\begin{enumerate}
    \item[(1)] $\ord_t(\omega)=\ell$,  $x(\varphi)=\omega^M$
    \item[(2)] $\ord_t(\varepsilon)\geq Mr\ell$ (resp. $=Mr\ell$ if $\tau$
    ends in an edge),
    $y(\varphi)=y_\tau(\omega) +\varepsilon$.
\end{enumerate}
The mapping
$(\varepsilon,\omega)\longmapsto(\omega^M,y_\tau(\omega)+\varepsilon)$
induces an isomorphism
\begin{equation*}
\Phi :
(\AA_k^1\times \GM)\setminus Q_{\tau,f})
^{-1}(0)\times\AA_k^{N(\tau,\ell)} \longrightarrow
\cX_{\tau,\ell}
\end{equation*}
given by
\begin{equation*}
(c, \omega_\ell,a)  \longmapsto (t^{\ell M}
(\omega_\ell +\sum_{k=1}^{\ell (\bar{\nu}(\tau )-M)} a_k t^k )^M
[t^{\ell \bar{\nu}(\tau )+1}],
y_\tau(\omega)+ct^{Mr\ell}+\sum_{\ell (\bar{\nu}(\tau )-M)<k\leq
\ell \bar{\nu}(\tau )} a_k t^k [t^{\ell \bar{\nu}(\tau )+1}] ).
\end{equation*}
Via the isomorphism $\Phi$, the angular coefficient $\ac
(f(\varphi))$ is given, up to a non-zero constant, by 
the following formula:
\begin{equation*}\label{}
\ac (f(\varphi))\sim 
\omega_\ell^{\nu(\tau)}
P_{\tau,f}(\omega_\ell^{-Mr}c)= Q_{\tau,f}(c,\omega_\ell).
\end{equation*}
For the $\GM$-action $\sigma$ on $(\AA_k^1\times \GM)$ given by
$\sigma(\lambda) \cdot (c, \omega_\ell)=(\lambda^{Mr\ell} c,
\lambda^{\ell} \omega_\ell)$, the polynomial $Q_{\tau,f}$ is
homogeneous of degree $\nu(\tau)\ell$.
\end{lem}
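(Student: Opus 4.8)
The plan is to prove, in order, the four assertions packaged in the lemma: the parametrisation $(1)$--$(2)$ of arcs in $\cX_{\tau,\ell}$, that $\Phi$ is an isomorphism, the formula for $\ac(f(\varphi))$, and the $\sigma$-homogeneity of $Q_{\tau,f}$. Throughout I would use the Newton--Puiseux factorisation $f=u\cdot\prod_{i=1}^{m}\bigl(y-y_i(x)\bigr)$, with $u$ a unit and the $y_i\in k[[x^{1/N}]]$, together with the description recalled just before the statement: for an arc $\varphi$ of contact $\tau$ with $f$ there is $\omega$ with $\ord_t\omega=\ell$, $x(\varphi)=\omega^M$ and $y(\varphi)\equiv y_\tau(\omega)\ (\mathrm{mod}\ t^{\lceil rM\ell\rceil})$. \emph{Parametrisation.} Given $\varphi\in\cX_{\tau,\ell}$, that description provides $\omega$, and we set $\varepsilon:=y(\varphi)-y_\tau(\omega)$, which has $\ord_t\varepsilon\ge Mr\ell$; this gives $(1)$ and the inequality in $(2)$. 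The refinement in $(2)$ records where $\varphi$ leaves the Puiseux expansions, knowing that its order of contact with $f$ is \emph{exactly} $r$: if $\tau$ ends in the interior of an edge, every root of $f$ has vanishing coefficient at exponent $r$, so leaving $\tau$ precisely at height $r$ forces the $t^{Mr\ell}$-coefficient of $\varepsilon$ to be non-zero, i.e.\ $\ord_t\varepsilon=Mr\ell$; if $\tau$ ends at a vertex $v$, that coefficient $c$ must merely avoid the values $a_i\omega_\ell^{Mr}$, where the $a_i$ are the coefficients at exponent $r$ of the roots through $v$ (otherwise $\varphi$ follows one of them and the contact is longer), and this is exactly the condition $Q_{\tau,f}(c,\omega_\ell)\neq0$. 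In both cases ``$\varphi$ has contact exactly $\tau$'' is thus recorded by the single inequality $Q_{\tau,f}(c,\omega_\ell)\neq0$.

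\emph{The isomorphism $\Phi$.} The displayed map is visibly a morphism, and by the previous paragraph it maps $\bigl((\AA_k^1\times\GM)\setminus(Q_{\tau,f})^{-1}(0)\bigr)\times\AA_k^{N(\tau,\ell)}$ onto $\cX_{\tau,\ell}$: $\omega_\ell$ is the leading coefficient of $\omega$, $c$ the $t^{Mr\ell}$-coefficient of $\varepsilon$, the $a_k$ of small index the remaining coefficients of $\omega$ and those of large index the coefficients of $\varepsilon$ beyond order $Mr\ell$, the only constraints being $\omega_\ell\in\GM$ and $Q_{\tau,f}(c,\omega_\ell)\neq0$; the identity $2+N(\tau,\ell)=\dim\cX_{\tau,\ell}$ is a direct count using $\bar\nu(\tau)\ge M$ and $\bar\nu(\tau)\ge\nu(\tau)\ge Mr$. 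For injectivity, one recovers $\omega$ from $(x(\varphi),y(\varphi))$: among the $M$ roots of $x(\varphi)$ of order $\ell$, exactly one makes $\ord_t(y(\varphi)-y_\tau(\omega))\ge Mr\ell$, because replacing $\omega$ by $\zeta\omega$ with $\zeta\in\mu_M$, $\zeta\neq1$, alters $y_\tau(\omega)$ in a term of order $<Mr\ell$ --- here one uses that $M$ is the least common multiple of the denominators of the exponents of $\tau$, hence is coprime to the set of exponents of $y_\tau$. This $\omega$, and hence $\omega_\ell$, $c$ and the $a_k$, are regular functions of $(x(\varphi),y(\varphi))$: one solves for $\omega_\ell$ from finitely many low-order coefficients using that coprimality, then for $\omega$ from $\omega^M=x(\varphi)$, then reads off the rest. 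So $\Phi^{-1}$ is a morphism and $\Phi$ an isomorphism.

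\emph{The angular coefficient and the homogeneity.} Substituting $x=\omega^M$ and $y=y_\tau(\omega)+\varepsilon$ into the factorisation of $f$, split the roots into the set $I_\tau$ of those agreeing with $y_\tau$ up to height $r$ --- equivalently, passing through the endpoint of $\tau$, so that $|I_\tau|=\deg P_{\tau,f}$ and $P_{\tau,f}(X)=\prod_{i\in I_\tau}(X-a_i)$ with $a_i$ the coefficient of $y_i$ at exponent $r$ ($a_i=0$ in the edge case) --- and the rest. For $i\notin I_\tau$ the difference $y_\tau(\omega)-y_i(\omega^M)$ dominates $\varepsilon$, has $t$-order $Mh_i\ell$ with $h_i<r$ the first disagreement height, and leading coefficient a non-zero constant times a monomial in $\omega_\ell$. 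For $i\in I_\tau$ one has $y-y_i(\omega^M)=\varepsilon+\bigl(y_\tau(\omega)-y_i(\omega^M)\bigr)$ of $t$-order $Mr\ell$ with leading coefficient $c-a_i\omega_\ell^{Mr}$, so these factors contribute $\prod_{i\in I_\tau}(c-a_i\omega_\ell^{Mr})=\omega_\ell^{Mr|I_\tau|}P_{\tau,f}(\omega_\ell^{-Mr}c)$. Multiplying everything, $\ord_t f(\varphi)=\ell\bigl(M\sum_{i\notin I_\tau}h_i+Mr|I_\tau|\bigr)=\nu(\tau)\ell$, and $\ac(f(\varphi))$ equals, up to the non-zero constant $u(0)\prod_{i\notin I_\tau}(\text{leading coeff})$, the product $\omega_\ell^{M\sum_{i\notin I_\tau}h_i}\cdot\omega_\ell^{Mr|I_\tau|}P_{\tau,f}(\omega_\ell^{-Mr}c)=\omega_\ell^{\nu(\tau)}P_{\tau,f}(\omega_\ell^{-Mr}c)=Q_{\tau,f}(c,\omega_\ell)$, which is the asserted relation $\ac(f(\varphi))\sim Q_{\tau,f}(c,\omega_\ell)$. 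The homogeneity is then immediate: for $\sigma(\lambda)\cdot(c,\omega_\ell)=(\lambda^{Mr\ell}c,\lambda^{\ell}\omega_\ell)$ the factors $\lambda^{\pm Mr\ell}$ cancel inside $P_{\tau,f}$ while $\omega_\ell^{\nu(\tau)}$ is multiplied by $\lambda^{\nu(\tau)\ell}$, so $Q_{\tau,f}(\lambda^{Mr\ell}c,\lambda^{\ell}\omega_\ell)=\lambda^{\nu(\tau)\ell}Q_{\tau,f}(c,\omega_\ell)$.

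\emph{Main obstacle.} The delicate step is the leading-term computation for $\ac(f(\varphi))$: substituting $x(\varphi)$ into the fractional power series $y_i\in k[[x^{1/N}]]$ a priori produces series in a fractional power of $t$, which recombine to a genuine power series in $t$ only once the product over all roots of $f$ is taken, so the computation must be organised over orbits of conjugate roots; one must then check that no leading coefficients cancel (they do not, being products of non-zero coefficients of distinct Puiseux expansions) and verify that the accumulated powers of $\omega_\ell$ reassemble to give the prefactor $\omega_\ell^{\nu(\tau)}$. The edge-versus-vertex dichotomy in $(2)$, and its uniform reformulation through the single condition $Q_{\tau,f}(c,\omega_\ell)\neq0$, is the other point requiring care; granting these, the isomorphism assertion is routine arc-space bookkeeping and the homogeneity is a one-line verification.
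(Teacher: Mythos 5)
Your proof is correct and follows the same approach as the paper's own (very terse, two-sentence) proof: surjectivity is read off the parametrisation, and injectivity comes from the coprimality of $M$ with the exponents of $y_\tau$, which pins down $\omega$ among the $M$ roots of $x(\varphi)$. You simply spell out in full the angular-coefficient computation and the homogeneity check, which the paper leaves implicit.
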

\begin{proof}We did already notice that the map $\Phi$ is surjective.
Conversely $\omega$ is determined by $x(\varphi)$ up to a $M$-th
root of unity, and uniquely determined by $x(\varphi)$ and
$y(\varphi)$ for the \emph{gcd} of $M$ and exponents of non zero
terms in $y_\tau(\omega)$ is equal to~1.
\end{proof}

\subsection{}\label{3.3}On the constructible set $\cX_{\tau,\ell}$, via the
isomorphism $\Phi$, we have a morphism to $\AA_k^1\times \GM$
induced by the first projection from $(\AA_k^1\times
\GM)\times\AA_k^{N(\tau,\ell)}$. The constructible set
$\cX_{\tau,\ell}$ defines a class in $\cM_{\AA_k^1\times
\GM}^{\GM}$ we denote by $[\cX_{\tau,\ell}]$. On the other hand,
the function $\ac(f)$ induces a $\GM$-equivariant morphism from
$\cX_{\tau,\ell}$ to $\GM$, hence defines a class in
$\cM_{\GM}^{\GM}$ we denote by $[\cX_{\tau,\ell}(f)]$. By Lemma
\ref{isoxrl}, the morphism $\cX_{\tau,\ell} \to \GM$ is equal to
the composition of the morphism $\cX_{\tau,\ell} \to \AA_k^1\times
\GM$ with $Q_{\tau,f}$.

\subsection{}If $v$ is a rupture vertex of height $r$, there is
only one contact ending in $v$ that we denote by $\tau_v$. We set
$Q_{v,f} := Q_{\tau_v,f}$.

We are now in position  to restate Guibert's theorem
\cite{guibert} in the following form:

\begin{theorem}[Guibert]With the above notation, the following holds:
\begin{equation*}
i_p^* \cS_{f}= [\varpi_{m_p} :\GM\longrightarrow \GM]-\sum_v
\Psi_{Q_{v,f}} ( [\mathrm{Id}:\AA_k^1\times \GM \longrightarrow
\AA_k^1\times \GM])
\end{equation*}
where the second sum runs over the rupture vertices of $\tau(f)$
above $p$.
\end{theorem}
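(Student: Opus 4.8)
The plan is to obtain the identity as a reformulation of Guibert's computation of the motivic Milnor fibre of a plane curve germ \cite{guibert}, using Lemma \ref{isoxrl} and the remarks of \S\ref{3.3} as the dictionary translating Guibert's arc-theoretic strata into the convolution operators $\Psi_{Q_{v,f}}$. I would start from $i_p^*\cS_f=-\lim_{T\mapsto\infty}i_p^*Z_f(T)$ and decompose the jet spaces $\cX_n(f)$ according to whether $x(\varphi)$ vanishes along the arc, writing $i_p^*Z_f(T)=Z'(T)+Z''(T)$, with $Z'$ collecting the arcs $\varphi$ such that $\varphi(0)=p$ and $x(\varphi)=0$ and $Z''$ those with $x(\varphi)\neq 0$.

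For $Z'$ the function $f$ restricts to $f(0,y)$, which near the root $p$ of $f(0,y)$ equals a unit times $(y-p)^{m_p}$; computing directly the motivic zeta function of the one-variable monomial $u\mapsto u^{m_p}$ as in \cite{motivic} yields $-\lim_{T\mapsto\infty}Z'(T)=[\varpi_{m_p}:\GM\to\GM]$, the first term of the formula. For $Z''$ I would stratify the arcs by their contact $\tau$ with $f$ and by the integer $\ell=\ord_t x(\varphi)/M$, the stratum being exactly the constructible set $\cX_{\tau,\ell}$. By Lemma \ref{isoxrl} and \S\ref{3.3}, the class of $\cX_{\tau,\ell}$, once weighted by the power of $\LL$ prescribed by $N(\tau,\ell)$, by $d=2$ and by the stabilization of jet spaces, equals $\LL^{-M\ell-\lfloor Mr\ell\rfloor}$ times the class in $\cM_{\GM}^{\GM}$ of $(\AA_k^1\times\GM)\setminus Q_{\tau,f}^{-1}(0)$ mapped to $\GM$ by $Q_{\tau,f}$ and carrying the $\GM$-action $\sigma$ of Lemma \ref{isoxrl} --- which is precisely the datum on which $\Psi_{Q_{\tau,f}}$ evaluates $[\mathrm{Id}]$. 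Summing over $\ell\geq 1$ and over all contacts, one recovers term by term the rational series that Guibert computes for $Z''(T)$.

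It remains to show that $-\lim_{T\mapsto\infty}$ collapses this $(\tau,\ell)$-sum to a sum over the rupture vertices of $\tau(f,p)$. Group the contacts by their endpoint. If $\tau$ ends at an edge, or at a vertex $v$ that is not a rupture vertex, then $P_{\tau,f}$ is a power of a single linear form, so $Q_{\tau,f}$ is a monomial $\omega^a c^b$ on $\GM\times\GM$, possibly after the linear change $c\mapsto c-a\omega^{Mr}$ (in the non-rupture vertex case $Mr$ is an integer); the corresponding subseries of $Z''(T)$, summed over $\ell$ and over the one-parameter family of such $\tau$ lying between two consecutive rupture vertices, is of geometric type and its $-\lim_{T\mapsto\infty}$ vanishes. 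This cancellation is the motivic counterpart of the fact that the non-rupture exceptional divisors drop out of the Denef--Loeser resolution formula, and it is exactly what is proved in \cite{guibert}; alternatively it can be reproved here by induction on the number of rupture vertices, peeling $\tau(f,p)$ from the top. The remaining contacts $\tau_v$, one for each rupture vertex $v$, produce after summation over $\ell$ exactly the rational series defining $\Psi_{Q_{v,f}}([\mathrm{Id}])$ in \cite{glm}, whose $-\lim_{T\mapsto\infty}$ is $-\Psi_{Q_{v,f}}([\mathrm{Id}])$; adding the contribution $[\varpi_{m_p}]$ of $Z'$ gives the claim.

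The main obstacle is this last step: aligning the normalization exponents ($N(\tau,\ell)$, $\bar\nu(\tau)$, the floor $\lfloor Mr\ell\rfloor$, the jet-space stabilization and $d=2$) with the ones built into the definition of $\Psi_Q$ in \cite{glm}, and establishing the cancellation of the non-rupture strata. One must also treat with care the low-contact strata where $\nu(\tau)<M$: there $\bar\nu(\tau)=M$ and $\cX_{\tau,\ell}$ sits at jet level $M\ell$ strictly above $\ord_t f(\varphi)=\nu(\tau)\ell$, so the comparison with $\cX_n(f)$ must be made through the affine-fibred projection $\cL_{M\ell}(\AA_k^2)\to\cL_{\nu(\tau)\ell}(\AA_k^2)$.
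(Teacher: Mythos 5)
Your overall strategy matches the paper's: write $i_p^*\cS_f$ as $-\lim_T$ of $i_p^*Z_f(T)$, stratify the arcs by their contact with $f$, cite Guibert's computations, and identify the limits with the convolutions $\Psi_{Q_{v,f}}$. However, the bookkeeping differs in two places, and one of them is a genuine gap.

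First, the paper does not separate off the arcs with $x(\varphi)=0$. Its decomposition is $-i_p^*Z_f=-Z_f^p-\sum_v Z_f^v$, where $Z_f^p$ collects contacts starting at the root that do not reach any rupture vertex (so arcs with $x(\varphi)\neq 0$), and it is this $Z_f^p$ that produces $-[\varpi_{m_p}]$ in the limit, by Guibert (3.3), (5.2). You instead extract $[\varpi_{m_p}]$ from the $x(\varphi)=0$ stratum $Z'$ by a direct one--variable computation. Your computation for $Z'$ is correct, and the result is the same; this part is a legitimate alternative route, arguably more elementary.

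Second, and more seriously, you assert that all non-rupture contacts (those $\tau$ strictly between two consecutive rupture vertices) contribute a geometric series whose $-\lim$ vanishes, so that only the single contact $\tau_v$ per rupture vertex survives. This is not how the paper's proof is organized, and it appears to be false as stated. The paper's $Z_f^v(T)$ sums over \emph{all} contacts $\tau\supset\tau_v$ that do not reach the next rupture vertex, and its limit is identified with $-\cS_f^v=\Psi_{Q_{v,f}}([\mathrm{Id}])$. Now the paper records the identity
\begin{equation*}
\Psi_{Q}([\mathrm{Id}])=-[Q:(\AA_k^1\times \GM)\setminus Q^{-1}(0)\to\GM]+\cS_Q(\AA_k^1\times \GM),
\end{equation*}
and when $Q^{-1}(0)$ is a union of one-dimensional $\GM$-orbits, $\cS_Q(\AA_k^1\times \GM)=-\sum_i[\pi_{E_i}:\GM\times\GM\to\GM]$, which is \emph{nonzero}. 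The first summand comes from arcs of contact exactly $\tau_v$ (where $Q_{v,f}$ does not vanish); the second, $\cS_{Q_{v,f}}$, is precisely the aggregate of the non-rupture contacts strictly above $\tau_v$. So these contacts do not cancel — they supply the $\cS_Q$ piece of $\Psi_Q([\mathrm{Id}])$. If you discard them you obtain only $-[Q_{v,f}:\ldots]$, not the full $-\Psi_{Q_{v,f}}([\mathrm{Id}])$, and the final formula would be off by $-\sum_v\cS_{Q_{v,f}}(\AA^1_k\times\GM)$. To repair your argument you should follow the paper and group, for each rupture vertex $v$, the contact $\tau_v$ together with all contacts up to (but not reaching) its successors, and then identify the limit of that whole subseries with $\Psi_{Q_{v,f}}([\mathrm{Id}])$ via the displayed identity.
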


\begin{proof}
Note that for a two variable quasihomogeneous polynomial $Q$
\begin{multline}\label{}
 \Psi_{Q}([\mathrm{Id} :
\AA_k^1\times \GM \longrightarrow  \AA_k^1\times \GM])
 =\\-[Q:(\AA_k^1\times \GM)\setminus
Q^{-1}(0) \longrightarrow \GM]+ \cS_Q([\AA_k^1\times \GM])
\end{multline}
where $\cS_Q$ is defined as in \cite{ivc}, \cite{glm}. We denote
by $\pi_E$ the morphism $(a,b)\longmapsto a^E$ from $\GM\times
\GM$ to $\GM$. When the zeroes of $Q$ are a disjoint union of one
dimensional $\GM$-orbits, $\cS_Q(\AA_k^1\times \GM)$ decomposes
into a sum
 \[
\cS_Q(\AA_k^1\times \GM)=-\sum_i [\pi_{E_i}:\GM\times
\GM\longrightarrow \GM]
 \]
where $E_i$ is the multiplicity of $Q$ along the $i$-th component
of $Q^{-1}(0)$. If $v$ is a rupture vertex  we consider the
following zeta function:
 \[
Z_f^{v}(T):=\sum_{\ell\geq 1}\sum_\tau [\cX_{\tau,\ell}(f)]
\,\LL^{-2\nu(\tau)\ell} \, T^{\nu(\tau)\ell}
 \]
where the second sum is extended to the set of contacts $\tau$
which contain $\tau_v$ and do not contain or end in any successor
of $v$ in the set of rupture vertices. From \cite{guibert} (3.3)
and (5.2), we deduce that $Z_f^{v}(T)$ has a limit $-\cS_f^{v}$ in
the Grothendieck ring $\cM_{\GM}^{\GM}$ when $T$ goes to infinity,
which is given by the formula
\[
\cS_f^v=-\Psi_{Q_{v,f}}([\mathrm{Id} : \AA_k^1\times \GM
\longrightarrow  \AA_k^1\times \GM]).
\]
 We consider the series
\[
Z_f^{p} (T):=\sum_{\ell\geq 1}\sum_\tau [\cX_{\tau,\ell}(f)]
\,\LL^{-2\nu(\tau)\ell} \, T^{\nu(\tau)\ell}
\]
where the second sum is extended to the set of contacts $\tau$
starting from the root corresponding to $p$, which do not contain
or end in any successor of $v$ in the set of rupture vertices.
Again by \cite{guibert},  loc. cit., $Z_f^{p} (T)$ has limit
$-[\varpi_{m_p}
:\GM\longrightarrow \GM]$ when $T$ goes to infinity.
The restriction $i_p^* \cS_{f}$ is the limit as $T \mapsto \infty$
of  $-i_p^* Z_{f}(T)$ which decomposes into
 \[
-i_p^* Z_{f}(T)= - Z_f^{p}(T)-\sum_v Z_f^{v}(T)
\]
where the sum extends to all the rupture vertices of $\tau(f)$
above $p$. The result follows.
\end{proof}

\section{Composition with a morphism}
\subsection{} For $1\leq j\leq 2$, let $g_j:X_j \longrightarrow
\AA_k^1$ be a function on a smooth $k$-variety $X_j$. By
composition with the projection, $g_j$ becomes a function on the
product $X=X_1\times X_2$. We write $d_j$ for the dimension of
$X_j$, $j$ from 1 to 2, and $d$ for $d_1+d_2$. Define $\mathbf{g}$
as the map $g_1\times g_2$ on $X$ and $G$ as the product
$G=g_1g_2$. For any subvariety $Z$ of the set $X_0(G)$ containing
$X_0(\mathbf{g}):=g_1^{-1}(0)\cap g_2^{-1}(0)$, we denote by $i$
the closed immersion of $X_0(\mathbf{g})$ in $Z$.

As in section \ref{section2}, we denote by $f$ a two variable
polynomial, we assume that $f(0,y)$ is a  nonzero polynomial, we
denote by $p$ the origin and will denote by $m_p$  the order of
$0$ as a root of $f(0,y)$. We consider the augmented set of
rupture vertices of the tree $\tau(f,p)$, namely the set of
rupture vertices together with the vertex of minimal non zero
height on $\tau(f,p)$. Denote that vertex by $v_0$ and consider
its associated polynomial $Q_{v_0}$. We denote by $\cS'_{g_2}$ the
element in $\cM_{X_0(g_2)\times \AA_k^1}^{\GM}$ which is the
``disjoint sum" of $\cS_{g_2}$ in $\cM_{X_0(g_2)\times \GM}^{\GM}$
and $X_0(g_2)$ in $\cM_{X_0(g_2)}$. We set $A_{v_0}:=
\cS'_{g_2}\boxtimes \cS_{g_1}$, considered as an element in
$\cM_{X_0(\mathbf{g})\times (\AA_k^1\times \GM)}^{\GM}$. For any
rupture vertex $v$ of the tree $\tau(f,p)$, we denote by $a(v)$
the predecessor of $v$ in the augmented set of rupture vertices
and we define by induction a virtual variety $A_v$ in
$\cM_{X_0(\mathbf{g})\times \AA_k^1\times \GM}^{\GM}$. We assume
we are given a virtual variety $A_{a(v)}$  in
$\cM_{X_0(\mathbf{g})\times \AA_k^1\times \GM}^{\GM}$ whose
restriction over $X_0(\mathbf{g})\times \GM\times\GM$ is
diagonally monomial in the sense of \cite{ivc} (2.3), or more
precisely whose $\GM$-action is diagonally induced from a
diagonally monomial $\GM^2$-action in the sense of \cite{ivc}. To
any successor of $a(v)$ corresponds a factor of the polynomial
$Q_{a(v)}$. Denote by $Q_{a(v)}^v$ the factor associated to $v$.
Notice that $(Q_{a(v)}^v)^{-1}(0)$ is a smooth subvariety in $\GM
\times \GM$ equivariant under a diagonal $\GM$-action and that the
second projection $\pr_2$ of the product $\AA_k^1\times \GM$
induces an homogeneous fibration from $(Q_{a(v)}^v)^{-1}(0)$ to
$\GM$. We denote by $B_v$ the restriction of $A_{a(v)}$ above
$(Q_{a(v)}^v)^{-1}(0)$. The external product of the identity of
the affine line $\AA^1_k$ by the induced map $\pr_2:
B_v\longrightarrow \GM$ defines an element $A_v$ in
$\cM_{X_0(\mathbf{g})\times (\AA_k^1\times \GM)}^{\GM}$ which is
diagonally monomial when restricted to $X_0(\mathbf{g})\times
\GM\times\GM$.

\begin{theorem}\label{main}With the previous notations
and hypotheses, the  following formula holds
\begin{equation}\label{}
i^* \cS_{f\circ \mathbf{g}} = \cS_{{(g_2)}^{m_p}}(X_0(g_1)) -
\sum_v \Psi_{Q_v} (A_v),
\end{equation}
where the sum is runs over  the augmented set of rupture vertices
of the tree 
 $\tau(f, p)$. 
\end{theorem}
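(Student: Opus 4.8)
The plan is to carry out, in the arc space of $X = X_1\times X_2$, the same argument as in the proof of Guibert's theorem above: one stratifies the motivic zeta function $Z_{f\circ\mathbf{g}}(T)$ according to the contact of the arc $\mathbf{g}(\varphi) = (g_1(\varphi_1),g_2(\varphi_2))$ with $f$ and lets $T\to\infty$ on each stratum. Rationality of $Z_{f\circ\mathbf{g}}(T)$ and existence of its limit are furnished by the general results of Denef and Loeser; only the value of the limit has to be identified.

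For a contact $\tau$ of order $r$ and an integer $\ell\geq 1$, I would introduce the constructible, $\GM$-invariant subset
\[
\cY_{\tau,\ell} := \bigl\{\varphi = (\varphi_1,\varphi_2)\in\cL_{\bar{\nu}(\tau)\ell}(X)\bigm\vert \mathbf{g}(\varphi)\ \text{has contact}\ \tau\ \text{with}\ f,\ \ord_t g_1(\varphi_1)=M\ell\bigr\}
\]
of the arc space. As in Lemma \ref{isoxrl} and \ref{3.3}, it carries a morphism to $\AA^1_k\times\GM$ recording the leading coefficient of $g_1(\varphi_1)$ and the leading term of the deviation $g_2(\varphi_2)-y_\tau(\omega)$, and the function $\ac((f\circ\mathbf{g})(\varphi))$ induces a $\GM$-equivariant morphism to $\GM$ which, by the same polynomial identity as in \ref{3.3}, is the composition of the former with $Q_{\tau,f}$. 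The crucial point is a fibration lemma replacing Lemma \ref{isoxrl}: projecting $\varphi$ onto the jets of $\varphi_1$ and $\varphi_2$ invisible to this data, one shows that $\cY_{\tau,\ell}$ fibers so that the $X_1$- and $X_2$-directions decouple; along it $g_1(\varphi_1)$ is an $M$-th power times a unit to leading order and $g_2(\varphi_2)$ agrees with $y_\tau(\omega)$ to the prescribed order, so that the pertinent coefficient maps are precisely the fibrations underlying the strata $\cX_n(g_1)$ and $\cX_n(g_2)$. After renormalizing and passing to the limit in $T$, the $X_1$-direction contributes $\cS_{g_1}$ and the $X_2$-direction contributes $\cS_{g_2}$ where the deviation is a unit times a power of $t$ and $X_0(g_2)$ where it vanishes to higher order, that is, $\cS'_{g_2}$; carried out at the lowest vertex $v_0$ of $\tau(f,p)$ this produces precisely $A_{v_0}=\cS'_{g_2}\boxtimes\cS_{g_1}$.

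Next, exactly as in Guibert's proof, I would introduce, for $v$ in the augmented set of rupture vertices, the partial zeta function
\[
Z^v_{f\circ\mathbf{g}}(T):=\sum_{\ell\geq 1}\sum_\tau [\cY_{\tau,\ell}(f\circ\mathbf{g})]\,\LL^{-d\nu(\tau)\ell}\,T^{\nu(\tau)\ell},
\]
the inner sum running over the contacts through $\tau_v$ that do not reach any rupture vertex above $v$ (the contacts too short to meet $v_0$ being attached to the family of $v_0$), together with the companion series $Z^p_{f\circ\mathbf{g}}(T)$ collecting the arcs on which $f\circ\mathbf{g}$ reduces, to leading order, to $g_2^{m_p}$ times a unit; in particular, for the arcs with $g_1(\varphi_1)=0$ one has $(f\circ\mathbf{g})(\varphi)=f(0,g_2(\varphi_2))=g_2(\varphi_2)^{m_p}u(g_2(\varphi_2))$ with $u(0)\neq 0$, and since a unit factor nonvanishing on the zero locus does not affect the motivic Milnor fiber, $Z^p_{f\circ\mathbf{g}}(T)$ has limit $-\cS_{(g_2)^{m_p}}(X_0(g_1))$. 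The limit of $Z^v_{f\circ\mathbf{g}}(T)$ is computed by induction up the tree, using the rationality results of Denef--Loeser together with the defining formula for the generalized convolution $\Psi_{Q_v}$ of \cite{glm}: the factorization of $Q_{a(v)}$ into the factors $Q_{a(v)}^{v'}$ indexed by the successors $v'$ of $a(v)$ separates the contacts that continue along the branch of $v$ — whose contribution forces the restriction $B_v$ of $A_{a(v)}$ to $(Q_{a(v)}^v)^{-1}(0)$, then the external product with $\mathrm{Id}_{\AA^1_k}$ to form $A_v$, and yields the term $-\Psi_{Q_v}(A_v)$ — from those that stop at $a(v)$, which are already accounted for in $\Psi_{Q_{a(v)}}(A_{a(v)})$. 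The inductive hypothesis that $A_{a(v)}$ is diagonally monomial over $X_0(\mathbf{g})\times\GM\times\GM$ in the sense of \cite{ivc} is precisely what makes $\Psi_{Q_v}$ applicable and is reproduced for $A_v$ by the external-product construction; the base case is $v_0$, which is why the final sum ranges over the augmented, rather than the plain, set of rupture vertices. Assembling the strata, $i^*\cS_{f\circ\mathbf{g}}=\lim_{T\to\infty}\bigl(-Z^p_{f\circ\mathbf{g}}(T)-\sum_v Z^v_{f\circ\mathbf{g}}(T)\bigr)$ yields the asserted formula.

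The main obstacle is the fibration lemma together with the equivariant bookkeeping it requires. One has to verify that $\cY_{\tau,\ell}$ genuinely decouples into (twisted) copies of the motivic zeta functions of $g_1$ and $g_2$ — that fixing the contact $\tau$ pins down the behaviour of $\mathbf{g}(\varphi)$ exactly to the point where the residual freedom in $\varphi_1$ and $\varphi_2$ is governed by the fibrations defining $\cX_n(g_1)$ and $\cX_n(g_2)$, with the truncation levels and the resulting double limit handled correctly — and, above all, that this decoupling is compatible, stratum by stratum and rupture vertex by rupture vertex, with the inductive definition of the $A_v$, $\GM$-equivariance and diagonal monomiality included. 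Granting the generalized convolution formalism of \cite{glm} and the diagonal-monomiality calculus of \cite{ivc}, the remaining limit computations are formally parallel to those in Guibert's proof; the genuine work lies in setting up this dictionary and checking its hypotheses at every stage of the induction.
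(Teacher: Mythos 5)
Your proposal takes a genuinely different inductive route from the paper, although the underlying motivic machinery is the same. You stratify $Z_{f\circ\mathbf{g}}(T)$ directly by rupture vertices, forming partial series $Z^v_{f\circ\mathbf{g}}$ and $Z^p_{f\circ\mathbf{g}}$ exactly as in the proof of Guibert's theorem in Section~3, and you build the virtual objects $A_v$ by an upward induction along the augmented set of rupture vertices. The paper instead argues \emph{downward}: it first reduces to the case where $\tau(f,p)$ has finitely many vertices (via a truncation lemma showing $i^*\cS_{f\circ\mathbf{g}}=i^*\cS_{f_{\tau_r}\circ\mathbf{g}}$ for $r$ large), then inducts on the \emph{total} number of vertices by peeling off a maximal vertex $v$, replacing $f$ by the truncated polynomial $f^-$, and identifying $Z_{f\circ\mathbf{g}}-Z_{f^-\circ\mathbf{g}}=Z_{\geq v}-Z^-_{\geq v}$ with $-\Psi_{Q_v}(A_v)$ in the limit. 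Because the paper inducts over all vertices, not just rupture vertices, it needs the concluding lemma that non-rupture vertices give $\Psi_{Q_v}(A_v)=0$ and $A_{s(v)}=A_v$; your approach absorbs this into the choice of strata and does not need it as a separate statement, but in exchange you must directly argue that each $Z^v$, whose inner sum over contacts may run along an infinite non-rupture branch, still has the claimed limit --- this is what the paper's truncation lemma is doing for you. Your approach buys a proof more visibly parallel to Guibert's one-variable theorem; the paper's approach buys the cleaner bookkeeping of comparing $f$ with $f^-$, where the inductive hypothesis can include the structural statement that the sets $\cX^-_{\tau,\ell}(\mathbf{g})$ are piecewise affine bundles over $X_0(\mathbf{g})\times\GM\times B_{a(v)}$.

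Two points deserve more care than you give them. First, the ``decoupling'' you invoke is not literal: the constraint $g_2(\varphi_2)\equiv y_\tau(\omega)\ [\mathrm{mod}\ t^{\lceil Mr\ell\rceil}]$ couples $\varphi_2$ to the choice of $M$-th root $\omega$ of $g_1(\varphi_1)$, so the $X_1$- and $X_2$-directions do not split as a product of independent arc-strata beyond the first vertex $v_0$. This is precisely why the paper encodes the history through the recursively defined restriction $B_v$ of $A_{a(v)}$ to $(Q^v_{a(v)})^{-1}(0)$ and formulates the intermediate statement as a piecewise-affine-bundle statement over $X_0(\mathbf{g})\times\GM\times B_v$, rather than a naive product of $\cX_n(g_1)$ and $\cX_n(g_2)$; you gesture at this with ``twisted copies'' but the induction has to be set up so that the twisting is carried by $B_v$. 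Second, your identification of the $Z^p$-term with $-\cS_{(g_2)^{m_p}}(X_0(g_1))$ correctly uses $f(0,y)=y^{m_p}u(y)$, but you should keep track of the $\GM$-equivariant leading coefficient (the nonzero constant $u(0)$ rescales $\ac$), which the paper silently absorbs in Lemma~\ref{isoxrl} via the ``$\sim$'' convention; this is harmless but should be said. Modulo filling in the fibration lemma you yourself flag as the main gap, the route is sound.
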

\begin{proof}We first reduce to the case where $\tau(f,p)$ has only a
finite number of vertices.
\begin{lem}There exists a rational number $\gamma$ such that, for any $r$
greater than~$\gamma$,
\begin{equation}\label{}
i^*\cS_{f\circ \mathbf{g}} = i^*\cS_{f_{\tau_r}\circ \mathbf{g}}.
\end{equation}
\end{lem}
\begin{proof}
Consider a rupture vertex $v$. The quotient $\ord_t(f\circ
\mathbf{g}(\varphi))/\ord_t (g_1(\varphi))$ is an affine function
of $r$ whenever $\tau$ contains $\tau_v$ and does not contain or
end in any rupture vertex greater than $v$. Hence the quotient
$\ord_t(f\circ \mathbf{g}(\varphi))/\ord_t (g_1(\varphi))$ is a
function on the tree $\tau(f)$, the restriction of which on each
semi-open branch joining two consecutive rupture vertices (resp.
on each infinite branch above a rupture vertex) is an increasing
affine function of the height.

We consider the following set of arcs
\begin{equation*}
\cX_{n_1,n_2}({x\circ \mathbf{g}, f\circ \mathbf{g}}) := \Bigl
\{\varphi \in \cL_{n_1+n_2} (X) \Bigm \vert \ord_tx\circ
\mathbf{g}=n_1,\; \ord_tf\circ \mathbf{g}=n_2\Bigr\}.
\end{equation*}
For 
$\gamma$
large enough, the zeta function
\begin{equation*}
Z^\gamma_{x\circ \mathbf{g}, f\circ \mathbf{g}}(T)=\sum_{n_2\geq
\gamma n_1}[\cX_{n_1,n_2}({x\circ \mathbf{g}, f\circ
\mathbf{g}})]\LL^{-(n_1,n_2)d}T^{n_2}
\end{equation*}
goes to zero as $T$ goes to infinity, cf. \cite{ivc}. The lemma
follows.
\end{proof}

To prove the theorem, it is enough to consider the case when the
tree $\tau(f,p)$ has a finite number of vertices. The proof goes
by induction on 
the number of vertices
of 
the tree $\tau(f,p)$. Certainly the result holds if there is no
vertex. Assume first the tree has only one vertex $v_0$. The
formula is then a particular case of the main formula in
\cite{glm}. Assume now we have at least two vertices. Choose a
maximal vertex $v$ for the height function on $\tau(f,p)$ and
consider the subtree $\tau^-$ obtained from $\tau(f,p)$ by
deleting the vertex $v$. Denote by $a(v)$ the predecessor of $v$
on $\tau(f,p)$ and by $f^{-}$ the polynomial associated to
$\tau^-$.

Consider an arc $\varphi$ in $\AA_k^2$ with origin $p$. Then one
of the  following two statements holds:
\begin{itemize}
    \item[$\bullet$] The contact of $\varphi$
    with $f$ does not contain $\tau_v$. Then
    $\ord_t(f(\varphi))=\ord_t(f^-(\varphi))$ and
    $\ac(f(\varphi))=\ac(f^-(\varphi))$.
    \item[$\bullet$] The contact of $\varphi$ with $f$  contains $\tau_v$.
\end{itemize}
According to these two different cases, we can split the zeta
function $Z_{f\circ \mathbf{g}}$ in two pieces, namely
\begin{equation}\label{}
Z_{f\circ \mathbf{g}}=Z_{<v}+Z_{\geq v},
\end{equation}
Similarly, the zeta function $Z_{f^-\circ \mathbf{g}}$ decomposes
in
\begin{equation}\label{}
Z_{f^-\circ \mathbf{g}}=Z_{<v}^-+ Z_{\geq v}^-.
\end{equation}
We noticed that $Z_{<v}^-=Z_{<v}$, hence we get
\begin{equation}\label{}
Z_{f\circ \mathbf{g}}- Z_{f^-\circ \mathbf{g}} =Z_{\geq v}-Z_{\geq
v}^-.
\end{equation}
In section \ref{3.3}, for any contact $\tau$ and integer $\ell$,
we have considered a set $\cX_{\tau,\ell}$ associated to a
polynomial $f$. Similarly we have a set $\cX_{\tau,\ell}^-$
associated to $f^-$. These two sets  map to $\AA_k^1\times \GM$.
Consider now the inverse image by $\mathbf{g}$ of
$\cX_{\tau,\ell}$ (resp. $\cX_{\tau,\ell}^-$) and denote it by
$\cX_{\tau,\ell}(\mathbf{g})$ (resp.
$\cX_{\tau,\ell}^-(\mathbf{g})$). We assume, by induction, that
the motivic nearby cycles of $f^-$ have the given form and that
for any contact $\tau$ greater than $\tau_{a(v)}$ the set
$\cX_{\tau,\ell}^-(\mathbf{g})$ is a piecewise affine bundle on
$X_0 (\mathbf{g}) \times \GM \times B_{a(v)}$.

An arc $\mathbf{g}\circ \varphi$ in $\cL_{\nu(\tau)\ell}(\AA^2_k)$
having contact $\tau$ with $f^-$ has contact $\tau_v$ with $f$ if
and only if $\tau=\tau_v$ and $Q_{a(v)}^v$ does not vanish on
$\varphi$ or if $\tau$ contains strictly $\tau_v$. In that case
$\varphi$ maps to $\{0\}\times B_{a(v)}$. Hence the set
$\cX_{\tau_v,\ell}(\mathbf{g})$ is a disjoint union of piecewise
affine bundles on $X_0 (\mathbf{g}) \times ((\AA^1_k \times
B_{a(v)}) \setminus B_v)$ and the function $\ac(f\circ
\mathbf{g})$ is given by the composition of the canonical map with
$Q_v$.

An arc $\mathbf{g}\circ \varphi$ in $\cL_{\nu(\tau)\ell}(\AA^2_k)$
has contact greater than $\tau_v$ with $f$ if and only if it has
contact $\tau_v$ with $f^-$ and $Q_{a(v)}^v$ vanish on $\varphi$.
Hence, for any contact $\tau$ greater than $\tau_v$, the set
$\cX_{\tau,\ell}(\mathbf{g})$ is a disjoint union of piecewise
affine bundles on $X_0 (\mathbf{g}) \times \GM\times B_v$ and the
function $\ac(f\circ \mathbf{g})$ is given by the composition of
the canonical map with the projection $X_0 (\mathbf{g}) \times
\GM\times B_v\longrightarrow \GM$.

We can compute the difference $Z_{\geq v}-Z^-_{\geq v}$ and check
that it has limit $\Psi_{Q_v}(A_v)$ as $T$ goes to infinity.

It is a consequence of  the following lemma, which follows from
direct computation, that only extended rupture vertices have a non
zero contribution.
\end{proof}

\begin{lem}
Consider a vertex $v$ of $\tau(f,p)$ and assume it is not
an extended
rupture vertex. Then the polynomial $Q_v$ is of the form:
$Q_v(c,\omega)=((c-\alpha\omega^R)\omega^N)^E$ where $R$, $N$ and
$E$ are integers and $\alpha$ a non-zero constant. It defines a
map from $\AA^1_k\times \GM$ to $\AA^1_k$ the zero set of which is
isomorphic to $\GM$. Then:
\begin{itemize}
    \item[$\bullet$]  $\Psi_{Q_v}(A_v)=0$ in
$\cM_{X_0(\mathbf{g})\times\GM}^{\GM}$.
    \item[$\bullet$] For
the unique successor $s(v)$ of $v$, the equality $A_{s(v)}=A_v$
holds  in $\cM_{X_0(\mathbf{g})\times\AA^1_k\times\GM}^{\GM}$.
\end{itemize}
\end{lem}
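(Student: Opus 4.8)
The plan is to prove the two bullets after first pinning down the shape of $Q_v$. Since $v$ is not a rupture vertex, the zero set of $P_{v,f}$ reduces to a single point $\alpha$ of $\AA_k^1$, which is nonzero because $v$ is not the root, so $P_{v,f}(X)=(X-\alpha)^{E}$ with $E=\deg P_{v,f}$. Writing $\tau_v$ for the unique contact ending at $v$, $r$ for its height and $M$ for the associated integer, one has $Q_v=Q_{\tau_v,f}$, $P_{\tau_v,f}=P_{v,f}$, and $Q_v(c,\omega)=\omega^{\nu(\tau_v)}P_{\tau_v,f}(\omega^{-Mr}c)$. Substituting gives $Q_v(c,\omega)=\omega^{\nu(\tau_v)-MrE}(c-\alpha\omega^{Mr})^{E}=((c-\alpha\omega^{R})\omega^{N})^{E}$ with $R=Mr$ and $NE=\nu(\tau_v)-MrE$. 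I would check $R,N,E\in\ZZ_{\geq0}$ by reading off the exponents occurring in the polynomial $Q_v\in k[c,\omega]$ together with the expression of $\nu(\tau_v)$ in terms of the orders of contact of $y_{\tau_v}$ with the roots of $f$ (it is precisely because $(X-\alpha)^E$ has all intermediate coefficients nonzero that $Mr$ is forced to be an integer here). The zero locus of $Q_v$ in $\AA_k^1\times\GM$ is the graph $\{c=\alpha\omega^{R}\}$, which is a single orbit for the action $\sigma$ of Lemma \ref{isoxrl} (as $c$ and $\alpha\omega^R$ carry the same weight) and maps isomorphically onto $\GM$ under $\pr_2$.

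Next, for the equality $A_{s(v)}=A_v$, I would use that, by the inductive construction (extended to every vertex $\neq v_0$), $A_v=\mathrm{Id}_{\AA_k^1}\boxtimes(\pr_2\colon B_v\to\GM)$; equivalently $A_v=\pr^{*}B_v$, where $\pr\colon X_0(\mathbf{g})\times\AA_k^1\times\GM\to X_0(\mathbf{g})\times\GM$ is the projection dropping the affine line. Since $v$ has a single successor, $Q_v$ has a single factor, so $Q_v^{s(v)}=Q_v$ and $B_{s(v)}$ is the restriction of $A_v$ over $Q_v^{-1}(0)$. Restricting the pullback $\pr^{*}B_v$ along the immersion $Q_v^{-1}(0)\hookrightarrow\AA_k^1\times\GM$ — over which $\pr$ restricts to the isomorphism $Q_v^{-1}(0)\xrightarrow{\ \sim\ }\GM$ — returns $B_v$, the monomial $\GM$-structures matching because $\pr$ and $\pr_2$ are $\GM$-equivariant; hence $B_{s(v)}=B_v$ and $A_{s(v)}=\mathrm{Id}_{\AA_k^1}\boxtimes(\pr_2\colon B_v\to\GM)=A_v$.

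Finally, for $\Psi_{Q_v}(A_v)=0$, I would introduce the map $\Theta\colon\AA_k^1\times\GM\to\AA_k^1\times\GM$, $(c,\omega)\mapsto((c-\alpha\omega^{R})\omega^{N},\omega)$, which is fibrewise over $\GM$ an affine automorphism of the line, hence an automorphism of $\AA_k^1\times\GM$, and is $\GM$-equivariant once the first factor is reweighted by $(Mr+N)\ell$; it satisfies $Q_v=\tilde c^{E}\circ\Theta$, where $\tilde c$ is the first coordinate. Since $\Theta$ reparametrizes the line fibrewise over $\GM$, $\Theta_{*}A_v$ is again of the form $\mathrm{Id}_{\AA_k^1}\boxtimes(\pr_2\colon B_v\to\GM)$, and by naturality of the convolution under equivariant automorphisms $\Psi_{Q_v}(A_v)=\Psi_{\tilde c^{E}}(\Theta_{*}A_v)$. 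As $\tilde c^{E}$ involves the first variable only, the compatibility of $\Psi$ with box products in the remaining variable (\cite{glm}) reduces this to $\Psi_{\tilde c^{E}}(\mathrm{Id}_{\AA_k^1})\boxtimes B_v$, and the formula used in the proof of Guibert's theorem gives $\Psi_{\tilde c^{E}}(\mathrm{Id}_{\AA_k^1})=-[\varpi_{E}\colon\GM\to\GM]+\cS_{\tilde c^{E}}$. Computing the motivic zeta function of $c\mapsto c^{E}$ directly — only $n=Ek$ contribute, with $[\cX_{Ek}(c^E)]=[\varpi_E]\LL^{k(E-1)}$, so $Z(T)=[\varpi_E]\LL^{-1}T^{E}(1-\LL^{-1}T^{E})^{-1}\to-[\varpi_E]$ as $T\to\infty$ — yields $\cS_{\tilde c^{E}}=[\varpi_E]$, whence $\Psi_{\tilde c^{E}}(\mathrm{Id}_{\AA_k^1})=0$ and therefore $\Psi_{Q_v}(A_v)=0$.

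The hard part will be the last step: making rigorous, inside the $\GM$-equivariant and $X_0(\mathbf{g})$-relative formalism of \cite{glm}, the naturality of $\Psi_Q$ under the equivariant automorphism $\Theta$ (keeping track of its shift of monomial weight on the $\AA_k^1$-factor) together with the box-product compatibility of $\Psi_Q$ in the variable not occurring in $Q$. Granting these two properties, the identification of the shape of $Q_v$, the identity $A_{s(v)}=A_v$, and the elementary computation of $\cS_{c^E}$ combine to give the statement.
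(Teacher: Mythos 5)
The paper gives no actual proof of this lemma --- it merely asserts that it ``follows from direct computation'' --- so your argument is filling a genuine gap rather than duplicating a proof. The first two parts of your proposal are sound. Since $v$ is neither a rupture vertex nor the root, $P_{v,f}(X)=(X-\alpha)^E$ with $\alpha\neq 0$, and substituting into $Q_v(c,\omega)=\omega^{\nu(\tau_v)}P_{\tau_v,f}(\omega^{-Mr}c)$ gives the stated form; your observation that polynomiality of $Q_v$ in $k[c,\omega]$, combined with the fact that all intermediate binomial coefficients of $(X-\alpha)^E$ are nonzero in characteristic $0$, forces $Mr\in\ZZ$ is exactly the right point, though you should also settle that $E$ divides $\nu(\tau_v)-MrE$ so that $N\in\ZZ$. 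The argument for $A_{s(v)}=A_v$ is correct: $A_v$ is by construction the $\pr_2$-pullback of $B_v$, $\pr_2$ restricts to an isomorphism $(Q_v^{s(v)})^{-1}(0)\xrightarrow{\sim}\GM$, so restricting and pulling back again reproduces $A_v$, with $\GM$-equivariance preserved because $\pr_2$ is equivariant.

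The third part, $\Psi_{Q_v}(A_v)=0$, is where the argument is incomplete, as you anticipate. Beyond the two properties of $\Psi_Q$ you already flag as unverified (naturality under $\Theta$ and box-product compatibility), there are two concrete problems. First, $\Theta$ changes the monomial weight on the $\AA^1_k$-factor from $Mr\ell$ to $(Mr+N)\ell$, so $A_v$ and $\Theta_*A_v$ live in $\GM$-equivariant rings with different actions, and the statement ``$\Psi_{Q_v}(A_v)=\Psi_{\tilde c^E}(\Theta_*A_v)$'' needs the convolution to be insensitive to this re-weighting, which is not an identity recorded in \cite{glm}. Second, the reduction to ``$\Psi_{\tilde c^E}(\mathrm{Id}_{\AA_k^1})\boxtimes B_v$'' does not parse: a one-variable convolution $\Psi_{\tilde c^E}$ is not among the operators defined in \cite{glm} (which begins at $p\geq 2$), and even if one grants it, the would-be box product $\Psi_{\tilde c^E}(\mathrm{Id}_{\AA_k^1})\boxtimes B_v$ sits over $\GM\times(X_0(\mathbf g)\times\GM)$, not over $X_0(\mathbf g)\times\GM$ where $\Psi_{Q_v}(A_v)$ must live. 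A safer route stays in two variables: for $A_v=\mathrm{Id}_{\AA^1_k}\boxtimes(\pr_2:B_v\to\GM)$, write the analogue of the formula used in the paper's proof of Guibert's theorem, $\Psi_{Q_v}(A_v)=-[\,Q_v\colon A_v|_{(\AA^1_k\times\GM)\setminus Q_v^{-1}(0)}\to\GM]+\cS_{Q_v}(A_v)$, then use your automorphism $\Theta$ to identify the first term with $[\pi_E]$-type data pulled back from $B_v$, and your (correct) computation $\cS_{c^E}=[\varpi_E]$ to evaluate the nearby-cycle term; the two terms cancel precisely because $Q_v^{-1}(0)$ is a single smooth $\GM$-orbit of multiplicity $E$ and $A_v$ is constant along the $\AA^1_k$-fibres. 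As it stands, your proof establishes the shape of $Q_v$ and $A_{s(v)}=A_v$ and correctly identifies the cancellation mechanism, but the vanishing itself still rests on two statements about $\Psi_Q$ that need to be extracted from \cite{glm} and stated precisely.
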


\bibliographystyle{amsplain}

\end{document}